\documentclass{article}

\usepackage{graphicx} 
\graphicspath{{Images/}}
\usepackage{comment}
\usepackage{amsmath, amssymb} 
\usepackage{amsthm} 
\usepackage{tikz} 
\usetikzlibrary{shapes.geometric} 
\usetikzlibrary{calc} 
\usepackage[backend=biber, style=numeric, sorting=nyt]{biblatex} 
\usepackage{microtype} 
\usepackage{hyperref} 
\usepackage{subcaption} 
\usepackage{wrapfig} 

\usepackage{subfiles} 

\newtheorem{thm}{Theorem}[section]
\newtheorem{prop}[thm]{Proposition}
\theoremstyle{definition}
\newtheorem{defn}[thm]{Definition}
\newtheorem{example}[thm]{Example}

\title{Constructing Block Designs from Complete Graphs}
\author{Benjamin Glancy and Leanne Holder}
\date{2026 May}

\begin{document}

\maketitle

\begin{abstract}
Block designs are combinatorial structures in which each pair of a set of varieties appears together in a fixed number of blocks. Complete graphs are graphs in which every pair of vertices are adjacent. We present some new constructions of block designs using complete graphs, including two infinite families of designs using edge sets of complete graphs.
\end{abstract}
\vfill
\noindent{\bf Keywords:} Balanced Incomplete Block Designs, Complete Graphs 

\noindent{\bf Mathematical Subject Classification} Primary: 05B30

\newpage
\section{Introduction}

A block design, introduced by F. Yates in 1936 \cite{eugenics}, is an incidence structure consisting of a set of objects called varieties and a family of subsets known as  blocks. Block designs were first used for statistical experimental design. They can be used to test several different treatments simultaneously in a controlled way. For example, a biologist may use a block design to administer several different medications to a set of mice, or a computer engineer may use a block design to test the effectiveness of upgrading several components of a computer system. Beyond these applications, design theory has become a rich area of mathematical research. Block designs are closely related to many other objects of combinatorial study, including finite fields, Latin squares, and graphs.

Graphs are another versatile type of structure. Graphs encode the connections among a set of objects. They have applications spanning from cartography to machine learning. Within mathematics, graphs appear in the study of groups, fractals, and of course, block designs.

In this paper we create two families of block designs from complete graphs with an odd number of vertices.

\subsection{Balanced Incomplete Block Designs}

\begin{defn}
A \emph{balanced incomplete block design (BIBD)} with parameters $(v, b, r, k, \lambda)$ is an ordered pair $(V, B)$ of a set $V$ of \emph{varieties} and a set $B$ of subsets of $V$ called \emph{blocks} such that $|V| = v$, $|B| = b$, each variety is an element of exactly $r$ blocks, each block has size $k$ (with $k < v$), and each pair of varieties is a subset of exactly $\lambda$ blocks.
\end{defn}

The parameter $r$ is called the \emph{replication number} and  $\lambda$ is called the \emph{index}. The five parameters of a BIBD are not all independent. The following proposition gives two fundamental relationships between them.

\begin{prop}\label{prop-bibd-parameters}
Let $(v, b, r, k, \lambda)$ be the parameters of a BIBD. Then
\begin{equation}\label{eq-vr-bk}
v r = b k
\end{equation}
and
\begin{equation}\label{eq-rk-lambdav}
r (k - 1) = \lambda (v - 1).
\end{equation}
\end{prop}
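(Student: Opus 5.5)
Both identities will come from double counting, each applied to a suitably chosen set of incidences.

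For \eqref{eq-vr-bk}, I will count the set of ordered pairs $(x, B')$ with $x \in V$, $B' \in B$ and $x \in B'$. Counting by varieties first, each of the $v$ varieties lies in exactly $r$ blocks, so the set has $vr$ elements. Counting by blocks first, each of the $b$ blocks contains exactly $k$ varieties, so it has $bk$ elements. Equating the two counts gives $vr = bk$.

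For \eqref{eq-rk-lambdav}, I will fix an arbitrary variety $x \in V$ and count the set of pairs $(y, B')$ with $y \in V \setminus \{x\}$, $B' \in B$ and $\{x, y\} \subseteq B'$.
\begin{itemize}
\item Counting by $y$: each of the $v-1$ varieties $y \neq x$ lies together with $x$ in exactly $\lambda$ blocks, giving $\lambda(v-1)$.
\item Counting by $B'$: only blocks containing $x$ contribute, and there are exactly $r$ of them. Each such block contributes its $k-1$ elements other than $x$, giving $r(k-1)$.
\end{itemize}
Equating yields $r(k-1) = \lambda(v-1)$.

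The only point needing care is the second count. One must restrict to a fixed $x$, rather than counting unordered pairs across all blocks, so that $r$ appears. Alternatively, one could count all triples $(x,y,B')$ with $x\neq y$ both in $B'$: this gives $v(v-1)\lambda = bk(k-1)$, which combined with the first identity yields the second after dividing by $v$.

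Implicit in the statement is that $r$ is the same for every variety; this is part of the definition of a BIBD, so no further argument is needed. The fact that $r$ is forced by the other parameters is in fact a consequence of the fixed-$x$ count above.
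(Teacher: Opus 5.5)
Your proof is correct and matches the paper's argument: double counting incidences $(x, Y)$ for $vr = bk$, then fixing a variety $x$ and double counting pairs $(y, Y)$ with $\{x, y\} \subseteq Y$ for $r(k-1) = \lambda(v-1)$. Your alternative triple count and your remark that the fixed-$x$ count forces $r$ to be constant are both valid extras.
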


\begin{proof}
For Equation (\ref{eq-vr-bk}), we count the number of ordered pairs $(x, Y)$ with $x$ a variety and $Y$ a block containing $x$. For the left-hand side, note that there are $v$ varieties and each one appears $r$ times. For the right-hand side, note that there are $b$ blocks and each block contains $k$ varieties. Thus $v r = b k$.

For Equation (\ref{eq-rk-lambdav}), we fix a variety $x$ and count the number of ordered pairs $(y, Y)$ with $y$ a variety and $Y$ a block such that $\{ x, y \} \subseteq Y$. For the left-hand side, note that $x$ is an element of $r$ blocks, and each of those blocks contains $k - 1$ other varieties. For the right-hand side, note that there are $v - 1$ varieties besides $x$ and $x$ shares $\lambda$ blocks with each of them. Thus $r (k - 1) = \lambda (v - 1)$. 
\end{proof}

A practical consequence of Proposition \ref{prop-bibd-parameters} is that we do not need to list all five parameters. It is customary to denote a $(v, b, r, k, \lambda)$-BIBD as a $(v, k, \lambda)$-BIBD or a $(v, k, \lambda)$-design. \emph{Balanced} refers to the fact that the index $\lambda$ is constant and \emph{incomplete} means that $k < v$, so no block contains all of the varieties.  A design is considered \emph{symmetric} if the number of varieties equals the number of blocks.  A symmetric design also has $r=k$ which follows from applying Proposition \ref{prop-bibd-parameters}.

Below we give two examples of block designs, one which is symmetric and a second which is not.

\begin{example} {\bf A symmetric (7,3,1)-BIBD} \label{example-fano-plane}

Consider the following variety set $V=\{0,1,2,3,4,5,6\}$ and block set $B = \{ \{ 0, 1, 3 \},\{ 1, 2, 4 \}, \{ 2, 3, 5 \},\{ 3, 4, 6 \},\{ 4, 5, 0 \}, \{ 5, 6, 1 \},\{ 6, 0, 2 \} \}$.  We clearly have $v=7$ varieties.  There are $b=7$ blocks each  with $k=3$ varieties.  It is easy to verify that each element of $V$ appears in $r=3$ blocks and any pair of elements appear in $\lambda =1$ blocks.  This design is balanced because $\lambda$ is the constant 1. Furthermore, this design is both symmetric and incomplete as $v=7=b$ and $k=3< 7=v$. Hence we have a $(7,7,3,3,1)-$ or $(7,3,1)-$balanced incomplete block design. 
\end{example}

\begin{example} {\bf A non-symmetric (16,4,1)-BIBD}

For this $(16, 4, 1)$-design we use the $v=16$ letters $a$ through $p$ as the varieties with the $b = 20$ blocks given by
\begin{align*} 
& \{a, b, c, d\}, && \{e, f, g, h\}, && \{i, j, k, l\}, && \{m, n, o, p\}, && \{a, e, i, m\}, && \{a, f, j, n\}, \\
& \{a, g, k, o\}, && \{a, h, l, p\}, && \{b, e, j, o\}, && \{b, f, i, p\}, && \{b, g, l, m\}, && \{b, h, k, n\}, \\
& \{c, e, l, o\}, && \{c, f, k, p\}, && \{c, g, i, n\}, && \{c, h, j, m\}, && \{d, e, k, n\}, && \{d, f, l, m\}, \\
& \{d, g, j, p\}, && \{d, h, i, o\}.
\end{align*}
Each block contains four letters, so $k = 4$. Each pair of letters appears in one block; for instance, $\{ a, g \} \subseteq \{ a, g, k, o \}$ and $\{ b, i \} \subseteq \{ b, f, i, p \}$. The reader is encouraged to convince themselves that this is true for every pair.  We use Equation (\ref{eq-rk-lambdav}) from Proposition \ref{prop-bibd-parameters} to see that 
\[ r = \frac{\lambda (v - 1)}{k - 1} = \frac{1 \cdot 15}{3} = 5, \]
so every letter is an element of five blocks.
\end{example}

\begin{defn}
A \emph{$t$-design} with parameters $t$-$(v, b, r, k, \lambda_t)$ is an ordered pair $(V, B)$ of a set $V$ of \emph{varieties} and a set $B$ of subsets of $V$ called \emph{blocks} such that $|V| = v$, $|B| = b$, each variety is an element of exactly $r$ blocks, each block has size $k$ (with $k < v$), and each $t$-subset of $V$ is a subset of exactly $\lambda_t$ blocks.
\end{defn}

Every balanced incomplete block design is a $t$-design with $t = 2$.

\begin{example}[\cite{database}]
Here is a $3$-$(10, 4, 1)$ design. The 10 varieties are the elements of $\mathbb{Z}_{10}$ and the $b = 30$ blocks are
\begin{align*}
& \{0, 1, 2, 8\}, && \{0, 1, 3, 6\}, && \{0, 1, 4, 5\}, && \{0, 1, 7, 9\}, && \{0, 2, 3, 7\}, && \{0, 2, 4, 6\}, \\
& \{0, 2, 5, 9\}, && \{0, 3, 4, 9\}, && \{0, 3, 5, 8\}, && \{0, 4, 7, 8\}, && \{0, 5, 6, 7\}, && \{0, 6, 8, 9\}, \\
& \{1, 2, 3, 4\}, && \{1, 2, 5, 7\}, && \{1, 2, 6, 9\}, && \{1, 3, 5, 9\}, && \{1, 3, 7, 8\}, && \{1, 4, 6, 7\}, \\
& \{1, 4, 8, 9\}, && \{1, 5, 6, 8\}, && \{2, 3, 5, 6\}, && \{2, 3, 8, 9\}, && \{2, 4, 5, 8\}, && \{2, 4, 7, 9\}, \\
& \{2, 6, 7, 8\}, && \{3, 4, 5, 7\}, && \{3, 4, 6, 8\}, && \{3, 6, 7, 9\}, && \{4, 5, 6, 9\}, && \{5, 7, 8, 9\}.
\end{align*}
Since $t = 3$ for this design, the parameter $\lambda = 1$ means that every set of three varieties is a subset of exactly one block.
\end{example}

\subsection{Complete Graphs}

\begin{defn}
A \emph{graph} is an ordered pair $(V, E)$ of a finite set $V$ of \emph{vertices} and a symmetric, anti-reflexive relation $E$ on $V$ called the \emph{edge set}. An element of $E$ is called an \emph{edge} of the graph.
\end{defn}

\begin{figure}[h]
\centering
\begin{tikzpicture}
\fill[black] (0, 0) circle[radius=0.1];
\fill[black] (0, 2) circle[radius=0.1];
\fill[black] (1, 1.5) circle[radius=0.1];
\fill[black] (2, 0.5) circle[radius=0.1];
\fill[black] (2, 2) circle[radius=0.1];
\fill[black] (3, 0) circle[radius=0.1];
\draw[thick] (0, 0) -- (1, 1.5) -- (0, 2);
\draw[thick] (2, 2) -- (1, 1.5) -- (2, 0.5) -- (2, 2);
\node[anchor=north east] at (0, 0) {$a$};
\node[anchor=south east] at (0, 2) {$b$};
\node[anchor=south] at (1, 1.5) {$c$};
\node[anchor=north] at (2, 0.5) {$d$};
\node[anchor=west] at (2, 2) {$e$};
\node[anchor=south] at (3, 0) {$f$};
\end{tikzpicture}
\caption{A garden-variety graph\label{fig-generic-graph}}
\end{figure}

Graphs are typically represented visually, as in Figure \ref{fig-generic-graph}. The vertices are shown as dots and the relation $E$ is shown as a set of lines connecting the dots. For example, the vertices $a$ and $c$ in Figure \ref{fig-generic-graph} are connected by a line, so we know both $(a, c)$ and $(c, a)$ are in $E$.

\begin{defn}
In a graph $(V, E)$, two vertices $x, y \in V$ are \emph{adjacent} if $(x, y) \in E$ and we call $y$ a \emph{neighbor} of $x$ (and $x$ a neighbor of $y$).  Furthermore, the \emph{degree} of a vertex $x$ is the number of neighbors of $x$.
\end{defn}

To return to the example of Figure \ref{fig-generic-graph}, vertices $a$ and $c$ are adjacent but $a$ and $b$ are not. The degree of vertex $c$ is four because it has four neighbors: $a$, $b$, $d$, and $e$. Vertex $f$ has degree zero because it is not adjacent to any other vertices.

\begin{defn}
A graph $(V, E)$ is \emph{regular} of degree $k$ if every vertex in $V$ has degree exactly $k$.
\end{defn}

\begin{figure}[h]
\begin{subfigure}{0.5\linewidth}
\centering
\begin{tikzpicture}
\fill[black] (0.75, 0) circle[radius=0.1];
\fill[black] (2.25, 0) circle[radius=0.1];
\fill[black] (0, 1.5) circle[radius=0.1];
\fill[black] (0.75, 3) circle[radius=0.1];
\fill[black] (2.25, 3) circle[radius=0.1];
\fill[black] (3, 1.5) circle[radius=0.1];
\draw[thick] (0.75, 0) -- (2.25, 0) -- (3, 1.5) -- (2.25, 3) -- (0.75, 3) -- (0, 1.5) -- (0.75, 0);
\node[anchor=south east] at (0.75, 3) {1};
\node[anchor=south west] at (2.25, 3) {2};
\node[anchor=west] at (3, 1.5) {$\, 3$};
\node[anchor=north west] at (2.25, 0) {4};
\node[anchor=north east] at (0.75, 0) {5};
\node[anchor=east] at (0, 1.5) {$6 \,$};
\end{tikzpicture}
\caption{The cycle graph of 6 vertices\label{fig-c6}}
\end{subfigure}
\begin{subfigure}{0.5\linewidth}
\centering
\begin{tikzpicture}
\fill[black] (0.75, 0) circle[radius=0.1];
\fill[black] (2.25, 0) circle[radius=0.1];
\fill[black] (0, 1.5) circle[radius=0.1];
\fill[black] (0.75, 3) circle[radius=0.1];
\fill[black] (2.25, 3) circle[radius=0.1];
\fill[black] (3, 1.5) circle[radius=0.1];
\draw[thick] (0.75, 0) -- (2.25, 0) -- (3, 1.5) -- (2.25, 3) -- (0.75, 3) -- (0, 1.5) -- (0.75, 0);
\draw[thick] (0.75, 3) -- (3, 1.5) -- (0.75, 0) -- (0.75, 3);
\draw[thick] (2.25, 3) -- (2.25, 0) -- (0, 1.5) -- (2.25, 3);
\draw[thick] (0, 1.5) -- (3, 1.5) (0.75, 3) -- (2.25, 0) (0.75, 0) -- (2.25, 3);
\node[anchor=south east] at (0.75, 3) {1};
\node[anchor=south west] at (2.25, 3) {2};
\node[anchor=west] at (3, 1.5) {$\, 3$};
\node[anchor=north west] at (2.25, 0) {4};
\node[anchor=north east] at (0.75, 0) {5};
\node[anchor=east] at (0, 1.5) {$6 \,$};
\end{tikzpicture}
\caption{The complete graph of 6 vertices\label{fig-k6}}
\end{subfigure}
\caption{\label{fig-reg-graph-examples}}
\end{figure}

\begin{example}
Both graphs in Figure \ref{fig-reg-graph-examples} are regular. Figure \ref{fig-c6} is an example of a \emph{cycle graph}, an infinite family of graphs consisting of a single loop that connects all of the vertices. All cycle graphs are regular with $k = 2$. Figure \ref{fig-k6} is an example of a \emph{complete graph}, a graph in which every vertex is adjacent to every other vertex. The complete graph of $n$ vertices is denoted $K_n$. Every vertex of $K_n$ has degree $n - 1$.
\end{example}

Complete graphs are the foundation of our work to create new block designs. The next proposition illustrates the connection between graphs and designs and serves to inspire our work.

\begin{prop}[\cite{cameron-lint}, \cite{lint-wilson}] \label{prop-cam-lint-ex2.2}
Let $E$ be the edge set of $K_5$ and let $B$ be the set of subgraphs of $K_5$ of the three types shown in Figure \ref{fig-ex2.2-k5-blocks}. Then $(E, B)$ is a $3$-$(10, 4, 1)$ $t$-design.
\end{prop}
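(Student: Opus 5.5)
The figure is not shown, but the three standard types of 4-edge subgraphs of $K_5$ in this classical construction are: the 4-cycles $C_4$ ($15$ of them), the stars $K_{1,4}$ ($5$), and the disjoint unions of a triangle and an edge $K_3 \cup K_2$ ($10$). That gives $b = 30$ blocks of size $k=4$ on $v=10$ varieties (the edges of $K_5$). I will assume these are the three types in Figure \ref{fig-ex2.2-k5-blocks}.

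The plan is to check directly that every $3$-subset of $E$ lies in exactly one block. I will split the $\binom{10}{3}=120$ triples of edges according to the isomorphism type of the $3$-edge subgraph of $K_5$ they form. For each type I will show there is exactly one way to add a fourth edge that produces a subgraph of one of the three allowed types.

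The $3$-edge graphs on at most $5$ vertices are:
\begin{itemize}
\item the triangle $K_3$, $10$ triples;
\item the star $K_{1,3}$, $5\cdot 4 = 20$ triples;
\item the path $P_4$ with three edges, $60$ triples;
\item the path $P_3$ plus a disjoint edge, $30$ triples.
\end{itemize}
Three pairwise disjoint edges would need $6$ vertices, so that type cannot occur. As a sanity check, $10+20+60+30=120$.

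The case analysis is then:
\begin{itemize}
\item A \textbf{triangle} extends to a $K_3\cup K_2$ only by the unique edge on the two remaining vertices. No $C_4$ or star contains a triangle.
\item A \textbf{$K_{1,3}$} centered at $x$ lies in a $K_{1,4}$ only by adding the edge from $x$ to the fifth vertex. Neither $C_4$ nor $K_3\cup K_2$ has a vertex of degree $3$.
\item A \textbf{path} $a\!-\!b\!-\!c\!-\!d$ is closed to a $C_4$ only by the edge $ad$. It lies in no star, since a star's edges share a common vertex. In $K_3\cup K_2$, every path of three edges lies inside the triangle, which has only $2$-edge paths, so no extension there either.
\item A \textbf{$P_3$ plus an edge}, say $a\!-\!b\!-\!c$ together with $de$, becomes a $K_3\cup K_2$ only by adding $ac$. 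It lies in no $C_4$, since a $C_4$ is connected on $4$ vertices while this graph uses all $5$ vertices. It lies in no star, because it is disconnected.
\end{itemize}

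As a double check, I will count triples block by block. Each block contains $\binom{4}{3}=4$ triples, so $30$ blocks give $120$ block-triple incidences, matching the $120$ triples. Once each triple is shown to lie in at least one block, this count forces exactly one.

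The main obstacle is ensuring the case list is exhaustive and that I match the figure's three types correctly. With that fixed, each case is a short uniqueness check, and I expect the least obvious one to be the path case's exclusion from $K_3\cup K_2$. The remaining parameters $r=12$ and $\lambda_2=\lambda_1$-style counts then follow from Proposition \ref{prop-bibd-parameters} applied to the derived design. They are not needed, however, since the definition only requires $\lambda_3=1$ together with constant $k=4$.
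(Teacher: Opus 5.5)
Your proposal is correct and is essentially the paper's proof: you use the same split of the $120$ edge triples into stars $K_{1,3}$, paths $P_4$, triangles, and $P_3$ plus a disjoint edge, with each type completed to exactly one fan ($K_{1,4}$), rectangle ($C_4$), or triangle-type ($K_3 \cup K_2$) block, and these are exactly the three types in the figure, as you assumed. You are more careful than the paper in ruling out the other block types case by case and in adding the $30 \cdot 4 = 120$ incidence count that turns existence into uniqueness (one small slip: the paper's definition of a $t$-design does also require a constant replication number, though this follows from $\lambda_3 = 1$ by a standard double count and gives your $r = 12$).
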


\begin{figure}[h]
\begin{subfigure}{0.3\linewidth}
\begin{tikzpicture}
\fill[black] (0.8, 0) circle[radius=0.1];
\fill[black] (2.4, 0) circle[radius=0.1];
\fill[black] (0, 1.6) circle[radius=0.1];
\fill[black] (3.2, 1.6) circle[radius=0.1];
\fill[black] (1.6, 2.8) circle[radius=0.1];
\draw (1.6, 2.8) -- (0, 1.6);
\draw (1.6, 2.8) -- (0.8, 0);
\draw (1.6, 2.8) -- (2.4, 0);
\draw (1.6, 2.8) -- (3.2, 1.6);
\end{tikzpicture}
\caption{``fan''}
\end{subfigure}
\hfill
\begin{subfigure}{0.3\linewidth}
\begin{tikzpicture}
\fill[black] (0.8, 0) circle[radius=0.1];
\fill[black] (2.4, 0) circle[radius=0.1];
\fill[black] (0, 1.6) circle[radius=0.1];
\fill[black] (3.2, 1.6) circle[radius=0.1];
\fill[black] (1.6, 2.8) circle[radius=0.1];
\draw (0, 1.6) -- (3.2, 1.6) -- (2.4, 0) -- (0.8, 0) -- (0, 1.6);
\end{tikzpicture}
\caption{``rectangle''}
\end{subfigure}
\hfill
\begin{subfigure}{0.3\linewidth}
\begin{tikzpicture}
\fill[black] (0.8, 0) circle[radius=0.1];
\fill[black] (2.4, 0) circle[radius=0.1];
\fill[black] (0, 1.6) circle[radius=0.1];
\fill[black] (3.2, 1.6) circle[radius=0.1];
\fill[black] (1.6, 2.8) circle[radius=0.1];
\draw (0, 1.6) -- (1.6, 2.8) -- (3.2, 1.6) -- (0, 1.6);
\draw (0.8, 0) -- (2.4, 0);
\end{tikzpicture}
\caption{``triangle''}
\end{subfigure}
\caption{Blocks of a $3$-$(10, 4, 1)$ design}
\label{fig-ex2.2-k5-blocks}
\end{figure}

\begin{proof}
In the graph $K_5$, every pair of the five vertices are adjacent so there are ${5 \choose 2} = 10$ edges. Thus $v = |E| = 10$. Each of the block types shown in Figure \ref{fig-ex2.2-k5-blocks} has four edges, so $k = 4$.

We claim that $t = 3$ and $\lambda = 1$, that is, every set of three edges is a subset of one block. There are four cases for the arrangement of three edges:
\begin{itemize}
\item[(a)] all adjacent at a single vertex (Figure \ref{fig-k5-subfan}),
\item[(b)] connected end-to-end in a path (Figure \ref{fig-k5-path}),
\item[(c)] forming a 3-cycle (Figure \ref{fig-k5-triangle}),
\item[(d)] two edges adjacent and the third disconnected (Figure \ref{fig-k5-disconnected}).
\end{itemize}
Case (a) appears in one fan-type block. Case (b) appears in one rectangle-type block. Cases (c) and (d) each appear in one triangle-type block. Since each 3-subset appears in exactly one block, we have $\lambda = 1$.

\begin{figure}[h]
\centering
\begin{subfigure}{0.24\linewidth}
\centering
\begin{tikzpicture}
\fill[black] (0.6, 0) circle[radius=0.1];
\fill[black] (1.8, 0) circle[radius=0.1];
\fill[black] (0, 1.2) circle[radius=0.1];
\fill[black] (2.4, 1.2) circle[radius=0.1];
\fill[black] (1.2, 2.1) circle[radius=0.1];
\draw (1.2, 2.1) -- (0, 1.2);
\draw (1.2, 2.1) -- (0.6, 0);
\draw (1.2, 2.1) -- (1.8, 0);
\end{tikzpicture}
\caption{\label{fig-k5-subfan}}
\end{subfigure}
\hfill
\begin{subfigure}{0.24\linewidth}
\centering
\begin{tikzpicture}
\fill[black] (0.6, 0) circle[radius=0.1];
\fill[black] (1.8, 0) circle[radius=0.1];
\fill[black] (0, 1.2) circle[radius=0.1];
\fill[black] (2.4, 1.2) circle[radius=0.1];
\fill[black] (1.2, 2.1) circle[radius=0.1];
\draw (0, 1.2) -- (2.4, 1.2) -- (1.8, 0) -- (0.6, 0);
\end{tikzpicture}
\caption{\label{fig-k5-path}}
\end{subfigure}
\hfill
\begin{subfigure}{0.24\linewidth}
\centering
\begin{tikzpicture}
\fill[black] (0.6, 0) circle[radius=0.1];
\fill[black] (1.8, 0) circle[radius=0.1];
\fill[black] (0, 1.2) circle[radius=0.1];
\fill[black] (2.4, 1.2) circle[radius=0.1];
\fill[black] (1.2, 2.1) circle[radius=0.1];
\draw (0, 1.2) -- (1.2, 2.1) -- (2.4, 1.2) -- (0, 1.2);
\end{tikzpicture}
\caption{\label{fig-k5-triangle}}
\end{subfigure}
\hfill
\begin{subfigure}{0.24\linewidth}
\centering
\begin{tikzpicture}
\fill[black] (0.6, 0) circle[radius=0.1];
\fill[black] (1.8, 0) circle[radius=0.1];
\fill[black] (0, 1.2) circle[radius=0.1];
\fill[black] (2.4, 1.2) circle[radius=0.1];
\fill[black] (1.2, 2.1) circle[radius=0.1];
\draw (0, 1.2) -- (1.2, 2.1) -- (2.4, 1.2);
\draw (0.6, 0) -- (1.8, 0);
\end{tikzpicture}
\caption{\label{fig-k5-disconnected}}
\end{subfigure}
\caption{\label{fig-3-edges-k5}}
\end{figure}
\end{proof}

\section{Designs on Complete Graphs}

We now present the first of two infinite families of BIBDs built on the edge sets of complete graphs. Theorem \ref{thm-kp-designs} describes designs in which the blocks are the edge sets of \emph{paths}, graphs that consist of several edges connected end-to-end. The path with $n$ vertices ($n - 1$ edges) is denoted by $P_n$. 

\begin{thm} \label{thm-kp-designs}
Let $E$ be the edge set of the complete graph on $n$ vertices, $K_n$, and let $B$ be the set of path subgraphs of $K_n$ of length $\hat{k}$ (that is, $\hat{k}$ edges, or $\hat{k} + 1$ vertices) with $\hat{k} \geq 2$. The pair $(E, B)$ forms a balanced incomplete block design, referred to as a KP design, if and only if $n = 2 \hat{k} - 1$. In that case, $(E, B)$ is a block design with parameters
\begin{align*}
v & = {2 \hat{k} - 1 \choose 2}, \\
b & = \frac{1}{2} P(2 \hat{k} - 1, \hat{k} + 1) = \frac{(2 \hat{k} - 1)!}{2 \cdot (\hat{k} - 2)!}, \\
r & = \hat{k} \cdot P(2 \hat{k} - 3, \hat{k} - 1) = \frac{\hat{k} \cdot (2 \hat{k} - 3)!}{(\hat{k} - 2)!}, \\
k & = \hat{k}, \\
\lambda & = (\hat{k} - 1) \cdot P(2 \hat{k} - 4, \hat{k} - 2) = \frac{(\hat{k} - 1) \cdot (2 \hat{k} - 4)!}{(\hat{k} - 2)!}.
\end{align*}
\end{thm}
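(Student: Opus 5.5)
The plan is to compute, for an arbitrary pair of distinct edges of $K_n$, the number of paths with $\hat{k}$ edges that contain both. By the symmetry of $K_n$, this number depends only on how the two edges sit relative to each other. There are just two cases: the edges share a vertex (they form a $P_3$, say $a$--$b$--$c$), or they are disjoint (say $ab$ and $cd$). So $(E,B)$ is balanced exactly when the two counts agree, and the equation expressing that agreement should force $n = 2\hat{k}-1$. Throughout I assume $n \ge \hat{k}+1$, since otherwise $B$ is empty.

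The counting device is to represent a path with $\hat{k}$ edges as an ordered sequence of $\hat{k}+1$ distinct vertices, modulo reversal. Each path then corresponds to exactly two sequences.

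\emph{Adjacent case.} The path contains both $ab$ and $bc$ exactly when $a,b,c$ appear consecutively as $abc$ or $cba$. I glue $abc$ into a single unit and choose an ordered list of the other $\hat{k}-2$ vertices from the remaining $n-3$, giving $P(n-3,\hat{k}-2)$ choices. I then place the unit in one of $\hat{k}-1$ positions and pick one of $2$ orientations, and finally divide by $2$ for reversal. This gives
\[ \lambda_{\mathrm{adj}} = (\hat{k}-1)\,P(n-3,\hat{k}-2). \]

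\emph{Disjoint case.} Here I glue $ab$ and $cd$ into two distinguishable units, each with $2$ orientations. I choose an ordered list of the other $\hat{k}-3$ vertices from the remaining $n-4$, giving $P(n-4,\hat{k}-3)$ choices, and insert the two units in $(\hat{k}-1)(\hat{k}-2)$ ways. Dividing by $2$ for reversal gives
\[ \lambda_{\mathrm{dis}} = 2(\hat{k}-1)(\hat{k}-2)\,P(n-4,\hat{k}-3). \]
This formula correctly gives $0$ when $\hat{k}=2$.

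Next I compare the two counts. Using $P(n-3,\hat{k}-2) = (n-3)\,P(n-4,\hat{k}-3)$ and cancelling the common positive factor $(\hat{k}-1)P(n-4,\hat{k}-3)$ (valid when $\hat{k}\ge 3$ and $n\ge \hat{k}+1$), the condition $\lambda_{\mathrm{adj}} = \lambda_{\mathrm{dis}}$ becomes $n-3 = 2(\hat{k}-2)$, that is, $n = 2\hat{k}-1$. This settles both directions of the equivalence.

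The main obstacle is the degenerate boundary cases, where the cancellation is not legitimate or one of the pair types does not occur. For $\hat{k}=2$, disjoint pairs exist precisely when $n\ge 4$. In that range $\lambda_{\mathrm{dis}}=0$ while $\lambda_{\mathrm{adj}}>0$, so the design is not balanced. If instead $n=3=2\hat{k}-1$, only adjacent pairs exist and the design is balanced. I will also check $n=\hat{k}+1$ separately, where some of the $P(\cdot,\cdot)$ factors vanish or become $P(m,0)=1$. Finally, I will check incompleteness, $\hat{k} < \binom{2\hat{k}-1}{2}$, which is clear for $\hat{k}\ge 2$.

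With $n=2\hat{k}-1$ established, the parameters follow directly:
\begin{itemize}
\item $v = \binom{n}{2}$ and $k=\hat{k}$ are immediate.
\item $\lambda$ is the common value $(\hat{k}-1)P(2\hat{k}-4,\hat{k}-2)$.
\item $b = P(n,\hat{k}+1)/2$, counting sequences modulo reversal.
\item For $r$, a single edge is counted the same way with one glued unit of $2$ vertices, giving $\hat{k}\,P(2\hat{k}-3,\hat{k}-1)$. Constancy of $r$ is also automatic by symmetry.
\end{itemize}
As a cross-check, these values should satisfy Equations (\ref{eq-vr-bk}) and (\ref{eq-rk-lambdav}) of Proposition \ref{prop-bibd-parameters}. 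The last step is to rewrite each permutation count in the factorial forms given in the statement.
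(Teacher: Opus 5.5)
Your proposal is correct and follows essentially the same route as the paper: vertex strings modulo reversal, the adjacent/nonadjacent split with counts $(\hat{k}-1)P(n-3,\hat{k}-2)$ and $2(\hat{k}-1)(\hat{k}-2)P(n-4,\hat{k}-3)$ (the paper writes the latter as $4\binom{\hat{k}-1}{2}P(n-4,\hat{k}-3)$), and the comparison reducing to $n-3=2(\hat{k}-2)$. Your explicit bookkeeping of unit order and reversal in the disjoint case, and your treatment of the degenerate cases $\hat{k}=2$ and $n<\hat{k}+1$, are more careful than the paper, which performs the cancellation without addressing these cases.
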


\begin{proof}
First, assume that $n = 2 \hat{k} - 1$. The number of varieties, $v$, in the design is the number of edges in the graph $K_{2\hat{k}-1}$. Since each pair of vertices have an edge between them, we have
\[ v = {n \choose 2} = {2 \hat{k} - 1 \choose 2}. \]
The blocks of the design are defined to be paths of length $\hat{k}$, so the size of each block is $k = \hat{k}$.

To count the total number of blocks $b$, we consider each block as a string of $\hat{k} + 1$ vertices, chosen from the set of $2 \hat{k} - 1$ total vertices. However, a path subgraph has no direction, so a string of vertices is equivalent forwards and backwards. Thus the total number of blocks is
\[ b = \frac{1}{2} P (n, \hat{k} + 1) = \frac{1}{2} P(2 \hat{k} - 1, \hat{k} + 1). \]
Expanding this expression in terms of factorials produces
\[ b = \frac{(2 \hat{k} - 1)!}{2 \cdot (\hat{k} - 2)!}. \]

We use a similar approach to count the replication number $r$. We denote the edge between the two vertices $a$ and $b$ by $(a, b)$. Furthermore, blocks that contain $(a, b)$ correspond to strings of vertices that contain the substring $a b$. There are $\hat{k}$ choices for the position of $a b$ in the string and $P(2 \hat{k} - 3, \hat{k} - 1)$ ways to fill in the remaining $\hat{k} - 1$ required vertices. Note that we no longer need to divide by 2 because requiring the substring $a b$ imposes a direction on the path; for example, this method counts the string $a b c$ but not the equivalent string $c b a$ because the latter includes $b a$ rather than $a b$. Thus we have
\[ r = \hat{k} \cdot P(2 \hat{k} - 3, \hat{k} - 1) = \frac{\hat{k} \cdot (2 \hat{k} - 3)!}{(\hat{k} - 2)!}. \]

The final parameter, $\lambda$, is the guarantee that this construction is truly a balanced incomplete block design. We claim that every pair of edges appears together in $(\hat{k} - 1) \cdot P(2 \hat{k} - 4, \hat{k} - 2)$ blocks. Two edges can either be adjacent, meaning they share one vertex, or nonadjacent.

A pair of adjacent edges is a path of length 2, which is represented by a string of length 3. Without loss of generality, consider the edges $(a, b)$ and $(b, c)$. The blocks that contain both of these edges can be represented by strings of length $\hat{k} + 1$ that contain the substring $a b c$. That substring can appear in any of $\hat{k} - 1$ positions, and the rest of the string can be filled out in $P(2 \hat{k} - 4, \hat{k} - 2)$ ways from the remaining vertices. Thus pairs of adjacent edges appear together in $(\hat{k} - 1) \cdot P(2 \hat{k} - 4, \hat{k} - 2)$ blocks.

Without loss of generality, consider the two nonadjacent edges $(a, b)$ and $(c, d)$. As before, we count strings of length $\hat{k} + 1$ with these substrings. However, in this case of nonadjacent edges, we must account for the relative orders of the two substrings. There are 4 different possible orders: $a b$ or $b a$ for the first, and $c d$ or $d c$ for the second. Next we choose two of the $\hat{k} - 1$ effective positions in the string for the given edges. Now four vertices ($a$, $b$, $c$, and $d$) have been placed, so we permute the remaining $2 \hat{k} - 5$ of them into the remaining $\hat{k} - 3$ positions. All together, the edges $(a, b)$ and $(c, d)$ appear together in
\[ 4 \cdot {\hat{k} - 1 \choose 2} \cdot P(2 \hat{k} - 5, \hat{k} - 3) \]
blocks. It remains to show that this is the same quantity as $\hat{k} \cdot P(2 \hat{k} - 3, \hat{k} - 1)$. In terms of factorials, we have
\begin{align*}
4 \cdot {\hat{k} - 1 \choose 2} \cdot P(2 \hat{k} - 5, \hat{k} - 3) & = \frac{4 (\hat{k} - 1)! (2 \hat{k} - 5)!}{2 (\hat{k} - 3)! (\hat{k} - 2)!} \\
& = \frac{2 (\hat{k} - 1) (\hat{k} - 2) (2 \hat{k} - 5)!}{(\hat{k} - 2)!} \\
& = \frac{(\hat{k} - 1) (2 \hat{k} - 4) (2 \hat{k} - 5)!}{(\hat{k} - 2)!} \\
& = \frac{(\hat{k} - 1) (2 \hat{k} - 4)!}{(\hat{k} - 2)!} \\
& = (\hat{k} - 1) \cdot P(2 \hat{k} - 4, \hat{k} - 2).
\end{align*}
Therefore any pair of vertices appears together in the same number of blocks, so the construction described is indeed a balanced incomplete block design with the stated parameters.

Now suppose that $n \neq 2 \hat{k} - 1$. As above, a pair of adjacent edges share
\[ (\hat{k} - 1) \cdot P(n - 3, \hat{k} - 2) \]
blocks and a pair of nonadjacent edges share
\[ 4 \cdot {\hat{k} - 1 \choose 2} \cdot P(n - 4, \hat{k} - 3) \]
blocks. We expand again in terms of factorials to obtain
\[ \frac{(\hat{k} - 1) (n - 3)!}{(n - \hat{k} - 1)!} \]
for adjacent edges and
\[ \frac{2 (\hat{k} - 1) (\hat{k} - 2) (n - 4)!}{(n - \hat{k} - 1)!} \]
for nonadjacent edges. Unlike before, $n - 3 \neq 2 (\hat{k} - 2)$, so these pairs of adjacent edges and pairs of nonadjacent edges do not appear in the same number of blocks. For $n < 2 \hat{k} - 1$, adjacent edges share fewer blocks, and for $n > 2 \hat{k} - 1$, adjacent edges share more blocks.
\end{proof}

The KP designs are named for the complete graph $K_n$ that provides the variety set and the path graphs $P_{k+1}$ that form the blocks. Let us examine an example.

\begin{example} \label{example-path-design}
Let $k = 3$, so we have paths of 3 edges in a complete graph of $2 \cdot 3 - 1 = 5$ vertices. The remaining parameters of this KP design are
\begin{align*}
v & = {5 \choose 2} = 10, \\
b & = \frac{1}{2} \cdot P(5, 4) = 60, \\
r & = 3 \cdot P(3, 2) = 18, \\
\lambda & = 4.
\end{align*}

Figure \ref{fig-kp3-adjacent-blocks} shows the four blocks containing a pair of adjacent edges. Either of the remaining two vertices can be connected to either end of the two edges in question.

\begin{figure}[h]
\centering
\begin{subfigure}{0.24\linewidth}
\centering
\begin{tikzpicture}
\fill[black] (0.6, 0) circle[radius=0.1];
\fill[black] (1.8, 0) circle[radius=0.1];
\fill[black] (0, 1.2) circle[radius=0.1];
\fill[black] (2.4, 1.2) circle[radius=0.1];
\fill[black] (1.2, 2.1) circle[radius=0.1];
\draw[thick] (0.6, 0) -- (0, 1.2) -- (1.2, 2.1);
\draw[thick, dashed] (1.2, 2.1) -- (2.4, 1.2);
\end{tikzpicture}
\caption{}
\end{subfigure}
\hfill
\begin{subfigure}{0.24\linewidth}
\centering
\begin{tikzpicture}
\fill[black] (0.6, 0) circle[radius=0.1];
\fill[black] (1.8, 0) circle[radius=0.1];
\fill[black] (0, 1.2) circle[radius=0.1];
\fill[black] (2.4, 1.2) circle[radius=0.1];
\fill[black] (1.2, 2.1) circle[radius=0.1];
\draw[thick] (0.6, 0) -- (0, 1.2) -- (1.2, 2.1);
\draw[thick, dashed] (1.2, 2.1) -- (1.8, 0);
\end{tikzpicture}
\caption{}
\end{subfigure}
\hfill
\begin{subfigure}{0.24\linewidth}
\centering
\begin{tikzpicture}
\fill[black] (0.6, 0) circle[radius=0.1];
\fill[black] (1.8, 0) circle[radius=0.1];
\fill[black] (0, 1.2) circle[radius=0.1];
\fill[black] (2.4, 1.2) circle[radius=0.1];
\fill[black] (1.2, 2.1) circle[radius=0.1];
\draw[thick] (0.6, 0) -- (0, 1.2) -- (1.2, 2.1);
\draw[thick, dashed] (0.6, 0) -- (2.4, 1.2);
\end{tikzpicture}
\caption{}
\end{subfigure}
\hfill
\begin{subfigure}{0.24\linewidth}
\centering
\begin{tikzpicture}
\fill[black] (0.6, 0) circle[radius=0.1];
\fill[black] (1.8, 0) circle[radius=0.1];
\fill[black] (0, 1.2) circle[radius=0.1];
\fill[black] (2.4, 1.2) circle[radius=0.1];
\fill[black] (1.2, 2.1) circle[radius=0.1];
\draw[thick] (0.6, 0) -- (0, 1.2) -- (1.2, 2.1);
\draw[thick, dashed] (0.6, 0) -- (1.8, 0);
\end{tikzpicture}
\caption{}
\end{subfigure}
\caption{Four paths containing two adjacent edges\label{fig-kp3-adjacent-blocks}}
\end{figure}

Figure \ref{fig-kp3-nonadjacent-blocks} shows the four blocks containing a pair of nonadjacent edges. Either of the vertices of one edge can be connected to either of the vertices of the other edge.

\begin{figure}[h]
\centering
\begin{subfigure}{0.24\linewidth}
\centering
\begin{tikzpicture}
\fill[black] (0.6, 0) circle[radius=0.1];
\fill[black] (1.8, 0) circle[radius=0.1];
\fill[black] (0, 1.2) circle[radius=0.1];
\fill[black] (2.4, 1.2) circle[radius=0.1];
\fill[black] (1.2, 2.1) circle[radius=0.1];
\draw[thick] (0.6, 0) -- (0, 1.2);
\draw[thick] (1.8, 0) -- (2.4, 1.2);
\draw[thick, dashed] (0.6, 0) -- (1.8, 0);
\end{tikzpicture}
\caption{}
\end{subfigure}
\hfill
\begin{subfigure}{0.24\linewidth}
\centering
\begin{tikzpicture}
\fill[black] (0.6, 0) circle[radius=0.1];
\fill[black] (1.8, 0) circle[radius=0.1];
\fill[black] (0, 1.2) circle[radius=0.1];
\fill[black] (2.4, 1.2) circle[radius=0.1];
\fill[black] (1.2, 2.1) circle[radius=0.1];
\draw[thick] (0.6, 0) -- (0, 1.2);
\draw[thick] (1.8, 0) -- (2.4, 1.2);
\draw[thick, dashed] (0, 1.2) -- (2.4, 1.2);
\end{tikzpicture}
\caption{}
\end{subfigure}
\hfill
\begin{subfigure}{0.24\linewidth}
\centering
\begin{tikzpicture}
\fill[black] (0.6, 0) circle[radius=0.1];
\fill[black] (1.8, 0) circle[radius=0.1];
\fill[black] (0, 1.2) circle[radius=0.1];
\fill[black] (2.4, 1.2) circle[radius=0.1];
\fill[black] (1.2, 2.1) circle[radius=0.1];
\draw[thick] (0.6, 0) -- (0, 1.2);
\draw[thick] (1.8, 0) -- (2.4, 1.2);
\draw[thick, dashed] (0.6, 0) -- (2.4, 1.2);
\end{tikzpicture}
\caption{}
\end{subfigure}
\hfill
\begin{subfigure}{0.24\linewidth}
\centering
\begin{tikzpicture}
\fill[black] (0.6, 0) circle[radius=0.1];
\fill[black] (1.8, 0) circle[radius=0.1];
\fill[black] (0, 1.2) circle[radius=0.1];
\fill[black] (2.4, 1.2) circle[radius=0.1];
\fill[black] (1.2, 2.1) circle[radius=0.1];
\draw[thick] (0.6, 0) -- (0, 1.2);
\draw[thick] (1.8, 0) -- (2.4, 1.2);
\draw[thick, dashed] (1.8, 0) -- (0, 1.2);
\end{tikzpicture}
\caption{}
\end{subfigure}
\caption{Four paths containing two nonadjacent edges\label{fig-kp3-nonadjacent-blocks}}
\end{figure}
\end{example}

\newpage 
\begin{thm} \label{thm-kc-designs}
Let $E$ be the set of edges of $K_n$ and let $B$ be the set of cyclic subgraphs of $K_n$ with $\hat{k}$ vertices, for $\hat{k} \geq 3$. If $n = 2\hat{k} - 3$, the pair $(E, B)$ forms a balanced incomplete block design, denoted a KC design, with parameters
\begin{align*}
v & = {2 \hat{k} - 3 \choose 2}, \\
b & = \frac{(2 \hat{k} - 3)!}{2 \hat{k} (\hat{k} - 3)!}, \\
r & = \frac{(2 \hat{k} - 5)!}{(\hat{k} - 3)!}, \\
k & = \hat{k}, \\
\lambda & = \frac{(2 \hat{k} - 6)!}{(\hat{k} - 3)!}.
\end{align*}
\end{thm}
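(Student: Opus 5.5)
The plan follows the proof of Theorem \ref{thm-kp-designs}: encode blocks as strings of vertices, count $b$ and $r$ by counting strings and dividing out symmetries, and compare the two kinds of edge pairs (adjacent and nonadjacent). Set $n = 2\hat{k}-3$. Then $v = \binom{n}{2}$, and $k = \hat{k}$ because a cycle on $\hat{k}$ vertices has $\hat{k}$ edges.

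For $b$, a cycle on $\hat{k}$ vertices is a cyclic string of $\hat{k}$ distinct vertices, read up to rotation ($\hat{k}$ choices) and reflection ($2$ choices). This gives
\[ b = \frac{P(n,\hat{k})}{2\hat{k}} = \frac{(2\hat{k}-3)!}{2\hat{k}\,(\hat{k}-3)!}. \]
For $r$, fix the edge $(a,b)$. Using the rotation and reflection symmetry, we may write every cycle through $(a,b)$ uniquely as a string starting with $ab$. The remaining $\hat{k}-2$ positions are filled with an ordered selection from the other $n-2 = 2\hat{k}-5$ vertices. So $r = P(2\hat{k}-5,\hat{k}-2) = (2\hat{k}-5)!/(\hat{k}-3)!$, as claimed.

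For $\lambda$, split into the same two cases as before.

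\emph{Adjacent edges} $(a,b),(b,c)$. Normalize so the string starts with $abc$. The rest is an ordered selection of $\hat{k}-3$ vertices from $n-3 = 2\hat{k}-6$, giving
\[ P(2\hat{k}-6,\hat{k}-3) = \frac{(2\hat{k}-6)!}{(\hat{k}-3)!}. \]

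\emph{Nonadjacent edges} $(a,b),(c,d)$. Normalize so the string starts with $ab$. The edge $(c,d)$ occupies two consecutive positions among the remaining $\hat{k}-2$ slots, in one of two orientations. There are $\hat{k}-3$ consecutive slot pairs: the slot pairs run from positions $3$–$4$ to positions $(\hat{k}-1)$–$\hat{k}$, and position $\hat{k}$ followed by $a$ is not allowed because $a$ is already used. The other $\hat{k}-4$ slots are filled from the $n-4 = 2\hat{k}-7$ remaining vertices. This gives
\[ 2(\hat{k}-3)\,P(2\hat{k}-7,\hat{k}-4) = \frac{2(\hat{k}-3)(2\hat{k}-7)!}{(\hat{k}-3)!} = \frac{(2\hat{k}-6)(2\hat{k}-7)!}{(\hat{k}-3)!} = \frac{(2\hat{k}-6)!}{(\hat{k}-3)!}. \]
So the two counts agree exactly when $n-3 = 2(\hat{k}-3)$, that is, $n = 2\hat{k}-3$.

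I expect the main obstacle to be the cyclic normalization, not the algebra. I need to check that anchoring the string at $ab$ picks exactly one representative per cycle: rotation fixes where $a$ sits, and reflection fixes that $b$ follows $a$. I also need to count correctly the consecutive slot pairs available to $(c,d)$, since the wrap-around to $a$ is excluded.

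The case $\hat{k}=3$ needs separate care, because then $n=3$, $K_3$ is its only triangle, and $k = v$, so the design is not incomplete. The statement should therefore be read with $\hat{k} \ge 4$ for incompleteness; the case $\hat{k}=3$ should be noted as degenerate.

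Finally, I will sanity-check the parameters against Proposition \ref{prop-bibd-parameters}, i.e. $vr = bk$ and $r(k-1) = \lambda(v-1)$.
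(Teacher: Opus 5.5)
Your proposal is correct and follows the paper's proof essentially step for step: the same string encoding, dividing $P(n,\hat{k})$ by $2\hat{k}$ for $b$, anchoring the string at $ab$ (resp.\ $abc$) for $r$ and for adjacent pairs, and the count $2(\hat{k}-3)\,P(2\hat{k}-7,\hat{k}-4)$ for nonadjacent pairs. Your caveat is also valid and the paper's statement misses it: $\hat{k}=3$ gives $k=v=3$, so the design is not incomplete (the paper's closing summary implicitly restricts to $n \geq 5$).
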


\begin{proof}
The fact that the blocks have size $k = \hat{k}$ is required explicitly by the construction. There are $n$ vertices of $K_n$ and each pair of them form an edge, so
\[ v = {n \choose 2} = {2 \hat{k} - 3 \choose 2}. \]

For the number of blocks, we begin by counting permutations of the $n = 2 \hat{k} - 3$ vertices into strings of length $\hat{k}$. However, every cyclic permutation of a string corresponds to the same cycle, and cycles are the same forwards as backwards, so we divide the number of permutations by $2 \hat{k}$ to obtain
\[ b = \frac{(2 \hat{k} - 3)!}{2 \hat{k} (\hat{k} - 3)!}. \]

For the replication number $r$, fix an edge $(a, b)$. There are $2 \hat{k} - 5$ vertices remaining, and $\hat{k} - 2$ more vertices are required to complete a block. By requiring that $a$ and $b$ be the ``first'' two vertices of the cycle, we avoid the overcounting encountered above, so the replication number is simply
\[ \frac{(2 \hat{k} - 5)!}{(\hat{k} - 3)!}. \]

As noted in the discussion of KP designs, every pair of edges in $K_n$ can be classified as either adjacent or nonadjacent. Thus it suffices to calculate the indices $\lambda_{adj}$ for a pair of adjacent edges and $\lambda_{non}$ for a pair of nonadjacent edges, and then show the two values to be equal. The counting argument for a pair of adjacent edges is nearly identical to the argument for $r$. Three vertices have been fixed, so we permute $\hat{k} - 3$ of the remaining $2 \hat{k} - 6$ vertices to complete a block. To count the blocks shared by a pair of nonadjacent edges, let $(a, b)$ and $(c, d)$ be two nonadjacent edges. Every block containing both $(a, b)$ and $(c, d)$ is a string of length $\hat{k}$ of the form
\[ a  \, b * * \cdots * c \, d * \cdots * \quad \text{or} \quad a \, b * * \cdots * d \, c * \cdots *, \]
in which each $*$ represents some other vertex. We must consider the relative order of $c$ and $d$ but not $a$ and $b$ because, for example, the strings $a b c d$ and $a b d c$ do not represent the same cycle, but the strings $a b c d$ and $b a d c$ do represent the same cycle. Since four vertices have been chosen, there are
\[ \frac{(2 \hat{k} - 7)!}{(\hat{k} - 3)!} \]
ways to fill in all of the asterisks. Furthermore, there are $\hat{k} - 3$ options for the position of $c d$ or $d c$ in the string, so altogether,
\[ \lambda_{non} = 2 (\hat{k} - 3) \frac{(2\hat{k} - 7)!}{(\hat{k} - 3)!} = \frac{(2 \hat{k} - 6) (2 \hat{k} - 7)!}{(\hat{k} - 3)!} = \lambda_{adj}. \]
Thus the design is indeed balanced.
\end{proof}

The KC designs are named analogously to the KP designs: K for ``Komplete'' and C for Cycle.

\begin{example} \label{example-cycle-design}
To parallel the previous example, we look at the KC design with block size $k = 4$, which is also built on the $K_5$ graph. As before, $v = 10$. The remaining parameters are
\begin{align*}
b & = \frac{(2 \cdot 4 - 3)!}{2 \cdot 4 \cdot (4 - 3)!} = \frac{5!}{8} = 15, \\
r & = \frac{(2 \cdot 4 - 5)!}{(4 - 3)!} = \frac{3!}{1} = 6, \\
\lambda & = \frac{(2 \cdot 4 - 6)!}{(4 - 3)!} = \frac{2!}{1} = 2.
\end{align*}

\begin{figure}[h]
\begin{subfigure}{0.24\linewidth}
\centering
\begin{tikzpicture}[scale=.7]
\fill[black] (0.8, 0) circle[radius=0.1];
\fill[black] (2.4, 0) circle[radius=0.1];
\fill[black] (0, 1.6) circle[radius=0.1];
\fill[black] (3.2, 1.6) circle[radius=0.1];
\fill[black] (1.6, 2.8) circle[radius=0.1];
\draw[thick] (0.8, 0) -- (0, 1.6) -- (1.6, 2.8);
\draw[thick, dashed] (1.6, 2.8) -- (3.2, 1.6) -- (0.8, 0);
\end{tikzpicture}
\caption{}
\end{subfigure}
\begin{subfigure}{0.24\linewidth}
\centering
\begin{tikzpicture}[scale=.7]
\fill[black] (0.8, 0) circle[radius=0.1];
\fill[black] (2.4, 0) circle[radius=0.1];
\fill[black] (0, 1.6) circle[radius=0.1];
\fill[black] (3.2, 1.6) circle[radius=0.1];
\fill[black] (1.6, 2.8) circle[radius=0.1];
\draw[thick] (0.8, 0) -- (0, 1.6) -- (1.6, 2.8);
\draw[thick, dashed] (1.6, 2.8) -- (2.4, 0) -- (0.8, 0);
\end{tikzpicture}
\caption{}
\end{subfigure}
\begin{subfigure}{0.24\linewidth}
\centering
\begin{tikzpicture}[scale=.7]
\fill[black] (0.8, 0) circle[radius=0.1];
\fill[black] (2.4, 0) circle[radius=0.1];
\fill[black] (0, 1.6) circle[radius=0.1];
\fill[black] (3.2, 1.6) circle[radius=0.1];
\fill[black] (1.6, 2.8) circle[radius=0.1];
\draw[thick] (0.8, 0) -- (0, 1.6);
\draw[thick] (1.6, 2.8) -- (3.2, 1.6);
\draw[thick, dashed] (0.8, 0) -- (1.6, 2.8);
\draw[thick, dashed] (0, 1.6) -- (3.2, 1.6);
\end{tikzpicture}
\caption{}
\end{subfigure}
\begin{subfigure}{0.24\linewidth}
\centering
\begin{tikzpicture}[scale=.7]
\fill[black] (0.8, 0) circle[radius=0.1];
\fill[black] (2.4, 0) circle[radius=0.1];
\fill[black] (0, 1.6) circle[radius=0.1];
\fill[black] (3.2, 1.6) circle[radius=0.1];
\fill[black] (1.6, 2.8) circle[radius=0.1];
\draw[thick] (0.8, 0) -- (0, 1.6);
\draw[thick] (1.6, 2.8) -- (3.2, 1.6);
\draw[thick, dashed] (0.8, 0) -- (3.2, 1.6);
\draw[thick, dashed] (0, 1.6) -- (1.6, 2.8);
\end{tikzpicture}
\caption{}
\end{subfigure}
\caption{Two cycles containing two adjacent edges, and two containing two nonadjacent edges\label{fig-kc-example}}
\end{figure}
\end{example}

Figure \ref{fig-kc-example} shows the two blocks containing a pair of adjacent edges ((a) and (b)) and the two blocks containing a pair of nonadjacent edges ((c) and (d)).

The last two examples illustrate the similarity between KP designs and KC designs. We now make that connection rigorous within the language of design theory.

\begin{prop} \label{prop-exploded-designs}
Let $(V, B)$ be a block design with parameters $(v, b, r, k, \lambda)$. For $2 \leq j \leq k$, let $B'$ be the set of all $j$-subsets of elements of $B$. Then $(V, B')$ is a block design with parameters
\[ (v', b', r', k', \lambda') = \left( v, b {k \choose j}, r {k - 1 \choose j - 1}, j,  \lambda {k - 2 \choose j - 2} \right). \]
\end{prop}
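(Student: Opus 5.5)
The plan is to count incidences directly, the way the proof of Proposition \ref{prop-bibd-parameters} does, organizing everything around the pairs $(Y, S)$ with $Y \in B$ and $S \subseteq Y$, $|S| = j$. First I will fix the meaning of $B'$. A given $j$-subset can lie inside several different blocks of $B$; for example, when $j = 2$ and $\lambda > 1$ the same pair lies in $\lambda$ blocks. So $B'$ must be read as a family, or multiset, with one copy of $S$ for each block $Y \supseteq S$. That is, $B'$ is in bijection with the set of pairs $(Y, S)$ above. Otherwise the claimed value of $b'$ fails, and so does balance in general. I expect making this reading explicit, and checking that the designs in the paper (e.g.\ the KP and KC constructions) are consistent with it, to be the main subtle point; the counting itself is routine.

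With that convention, $v' = v$ trivially, and $k' = j$ since every element of $B'$ is a $j$-set. Incompleteness follows from $j \le k < v$. For $b'$, each of the $b$ blocks contributes $\binom{k}{j}$ pairs $(Y,S)$, so $b' = b\binom{k}{j}$.

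For $r'$, fix $x \in V$ and count pairs $(Y, S)$ with $x \in S \subseteq Y$. There are $r$ choices of a block $Y$ containing $x$. For each such $Y$, $S$ is determined by choosing its other $j-1$ elements from the $k-1$ remaining elements of $Y$. This gives $r' = r\binom{k-1}{j-1}$, independent of $x$.

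For $\lambda'$, fix distinct $x, y \in V$ and count pairs $(Y,S)$ with $\{x,y\} \subseteq S \subseteq Y$. There are $\lambda$ blocks $Y$ containing $\{x,y\}$, and for each, $\binom{k-2}{j-2}$ ways to choose the remaining $j-2$ elements of $S$ from $Y \setminus \{x,y\}$. Here $j \ge 2$ is used. Hence $\lambda' = \lambda\binom{k-2}{j-2}$, again independent of the pair. This shows $(V, B')$ is a BIBD with the stated parameters.

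As a sanity check, I would verify $v'r' = b'k'$ and $r'(k'-1) = \lambda'(v'-1)$ via Proposition \ref{prop-bibd-parameters}. Both reduce to the original relations using $j\binom{k}{j} = k\binom{k-1}{j-1}$ and $(j-1)\binom{k-1}{j-1} = (k-1)\binom{k-2}{j-2}$.
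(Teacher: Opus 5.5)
Your proof is correct and follows the same route as the paper: for each block $Y \in B$, count the $\binom{k}{j}$ $j$-subsets of $Y$, the $\binom{k-1}{j-1}$ of them containing $x$, and the $\binom{k-2}{j-2}$ of them containing $\{x,y\}$, then multiply by $b$, $r$, and $\lambda$ respectively. Your explicit reading of $B'$ as a multiset indexed by pairs $(Y,S)$ is what the paper's count tacitly assumes, and its later remark that exploded designs can contain repeated blocks acknowledges this, so stating it up front improves on the literal ``set'' wording of the statement.
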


\begin{proof}
Since $(V, B')$ is built upon the same variety set, $v' = v$. By design, blocks in $B'$ have size $k' = j$. Note that each block of $B$ produces $k \choose j$ blocks of $B'$, so $b' = b {k \choose j}$. Let $x \in V$. Then $x$ is in $r$ blocks of $B$. For each block in $B$, there are $k - 1 \choose j - 1$ blocks in $B'$ that contain $x$ since every set containing $x$ and $j - 1$ other varieties is a block. Similarly, let $y \in V$ with $y \neq x$. Then $x$ and $y$ are in $k - 2 \choose j - 2$ blocks in $B'$ for every block from $B$. Thus $r' = r {k - 1 \choose j - 1}$ and $\lambda' = \lambda {k - 2 \choose j - 2}$.
\end{proof}

Proposition \ref{prop-exploded-designs} is not new; for instance, it is mentioned in \cite{shramchenko}. However, these authors know of no terminology to describe this procedure, so we introduce our own: A block design constructed in the manner defined in Proposition \ref{prop-exploded-designs} is called an \emph{exploded design}. To emphasize the block size $j$ of the resulting design, we employ the phrase \emph{$j$-exploded design}. It is important to note that exploded designs can contain repeated blocks even if the original design did not.

\begin{thm} \label{thm-paths-cycles-exploded}
For every $k \geq 2$, the KP design with block size $k$ is the $k$-exploded design of the KC design with block size $k + 1$.
\end{thm}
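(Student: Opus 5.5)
The plan is to compare the two block sets directly on the same variety set, rather than only matching parameters. The KP design with block size $k$ lives on the edge set of $K_{2k-1}$ by Theorem \ref{thm-kp-designs}. The KC design with block size $\hat{k} = k+1$ lives on the edge set of $K_{2(k+1)-3} = K_{2k-1}$ by Theorem \ref{thm-kc-designs}. Since $k \geq 2$ gives $\hat{k} = k+1 \geq 3$, that theorem applies. So both designs have the same variety set $E$, the edge set of $K_{2k-1}$. By Proposition \ref{prop-exploded-designs}, the $k$-exploded design of the KC design has as blocks the $k$-subsets of edge sets of $(k+1)$-cycles in $K_{2k-1}$. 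It therefore suffices to show that this family of blocks, counted with multiplicity, is exactly the set of paths with $k$ edges in $K_{2k-1}$, each occurring once.

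First I would show that every block of the exploded design is a path. A $(k+1)$-cycle has exactly $k+1$ edges, so a $k$-subset of them is the cycle with one edge deleted. Deleting one edge from a cycle on $k+1$ vertices leaves a path through all $k+1$ of those vertices. This path has $k$ edges, so it is a block of the KP design.

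Next I would show that each path of length $k$ arises from exactly one pair (cycle, deleted edge). Let $x_0 x_1 \cdots x_k$ be a path in $K_{2k-1}$. Its endpoints $x_0$ and $x_k$ are distinct vertices, and since $k \geq 2$ the edge $(x_0,x_k)$ is not an edge of the path. Adding $(x_0,x_k)$ therefore gives a genuine $(k+1)$-cycle, and deleting that edge recovers the path, which proves existence.

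For uniqueness, suppose $C$ is any $(k+1)$-cycle containing the path. Then $C$ has the same vertex set as the path, since the path already uses $k+1$ vertices. Its one remaining edge must join the two vertices of degree $1$ in the path, which forces $C$ to be the cycle constructed above and the deleted edge to be $(x_0,x_k)$. This gives a bijection between paths of length $k$ and pairs (cycle, edge of the cycle). In particular the exploded design has no repeated blocks, even though exploded designs can have them in general, and its block set equals $B$ from Theorem \ref{thm-kp-designs}.

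As a consistency check I would verify the parameters through Proposition \ref{prop-exploded-designs}. For instance, $b_{KC}\binom{k+1}{k} = \frac{(2k-1)!}{2(k+1)(k-2)!}(k+1) = \frac{(2k-1)!}{2(k-2)!}$, which matches $b$ for the KP design. The formulas for $r$ and $\lambda$ can be checked the same way, using the factors $\binom{k}{k-1} = k$ and $\binom{k-1}{k-2} = k-1$. The only delicate step is the uniqueness argument, which must rule out a single path being counted twice. The vertex-set observation above handles it, and it is also the only place where $k \geq 2$ is genuinely needed.
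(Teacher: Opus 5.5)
Your proposal is correct and follows the paper's proof. Deleting one edge of a $(k+1)$-cycle leaves a path with $k$ edges, and each such path closes up to exactly one $(k+1)$-cycle via the edge joining its endpoints, so the $k$-exploded KC design contains every path exactly once. You also spell out what the paper leaves implicit: both designs live on $K_{2k-1}$, why the closing cycle is unique, and a parameter check via Proposition~\ref{prop-exploded-designs}. One small quibble: $k \geq 2$ is actually used in the existence step (so that $(x_0,x_k)$ is not already a path edge) and in invoking Theorem~\ref{thm-kc-designs}, not in the uniqueness step.
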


\begin{proof}
Consider a path of size $k$. We can produce exactly one cycle of size $k + 1$ by adding an edge connecting the two endpoints of the path. Observe that any $k$-subset of a cycle of size $k + 1$ is a path of size $k$. Therefore, since the appropriate KC design contains every cycle of size $k + 1$ exactly once, its $k$-exploded design contains every path of size $k$ exactly once, which is the definition of the KP design with block size $k$.
\end{proof}

\begin{example}
We construct the 3-exploded design of the KC design in Example \ref{example-cycle-design}. The other parameters of the exploded design are
\begin{align*}
v' & = 10, \\
b' & = 15 \cdot {4 \choose 3} = 60, \\
r' & = 6 \cdot {3 \choose 2} = 18, \\
\lambda' & = 2 \cdot {2 \choose 1} = 4,
\end{align*}
which are indeed the parameters of the KP design of Example \ref{example-path-design}. Figure \ref{fig-exploded-paths-in-k5} shows the ${4 \choose 3} = 4$ exploded blocks obtained from one block of the KC design.

\begin{figure}[h]
\centering
\begin{subfigure}{0.24\linewidth}
\centering
\begin{tikzpicture}
\fill[black] (0.6, 0) circle[radius=0.1];
\fill[black] (1.8, 0) circle[radius=0.1];
\fill[black] (0, 1.2) circle[radius=0.1];
\fill[black] (2.4, 1.2) circle[radius=0.1];
\fill[black] (1.2, 2.1) circle[radius=0.1];
\draw[thick] (0.6, 0) -- (0, 1.2) -- (1.2, 2.1) -- (2.4, 1.2);
\draw[thick, dashed] (2.4, 1.2) -- (0.6, 0);
\end{tikzpicture}
\caption{}
\end{subfigure}
\hfill
\begin{subfigure}{0.24\linewidth}
\centering
\begin{tikzpicture}
\fill[black] (0.6, 0) circle[radius=0.1];
\fill[black] (1.8, 0) circle[radius=0.1];
\fill[black] (0, 1.2) circle[radius=0.1];
\fill[black] (2.4, 1.2) circle[radius=0.1];
\fill[black] (1.2, 2.1) circle[radius=0.1];
\draw[thick] (0, 1.2) -- (1.2, 2.1) -- (2.4, 1.2) -- (0.6, 0);
\draw[thick, dashed] (0.6, 0) -- (0, 1.2);
\end{tikzpicture}
\caption{}
\end{subfigure}
\hfill
\begin{subfigure}{0.24\linewidth}
\centering
\begin{tikzpicture}
\fill[black] (0.6, 0) circle[radius=0.1];
\fill[black] (1.8, 0) circle[radius=0.1];
\fill[black] (0, 1.2) circle[radius=0.1];
\fill[black] (2.4, 1.2) circle[radius=0.1];
\fill[black] (1.2, 2.1) circle[radius=0.1];
\draw[thick] (1.2, 2.1) -- (2.4, 1.2) -- (0.6, 0) -- (0, 1.2);
\draw[thick, dashed] (0, 1.2) -- (1.2, 2.1);
\end{tikzpicture}
\caption{}
\end{subfigure}
\hfill
\begin{subfigure}{0.24\linewidth}
\centering
\begin{tikzpicture}
\fill[black] (0.6, 0) circle[radius=0.1];
\fill[black] (1.8, 0) circle[radius=0.1];
\fill[black] (0, 1.2) circle[radius=0.1];
\fill[black] (2.4, 1.2) circle[radius=0.1];
\fill[black] (1.2, 2.1) circle[radius=0.1];
\draw[thick] (2.4, 1.2) -- (0.6, 0) -- (0, 1.2) -- (1.2, 2.1);
\draw[thick, dashed] (1.2, 2.1) -- (2.4, 1.2);
\end{tikzpicture}
\caption{}
\end{subfigure}
\caption{Four paths obtained from one cycle\label{fig-exploded-paths-in-k5}}
\end{figure}
\end{example}

We have shown that any complete graph with $n$ vertices, $n$ odd and $n\geq 5$, yields two designs for which the edges of the graph serves as varieties. The KP design takes as its blocks the edge sets of path subgraphs, and the KC design likewise for cycle subgraphs. Furthermore, these two design are closely related, as the KP design is the exploded design of the KC design on the same underlying complete graph.

\section{Further Study}

The KP and KC designs grew out of an effort to generalize the complete graph designs described in Example 19.2 and Problem 19A of \cite{lint-wilson}. Van Lint and Wilson's Example 19.2 is Proposition \ref{prop-cam-lint-ex2.2} in this document. We were unsuccessful in our attempt to uncover a single construction for all of those designs, but the existence of the KP and KC designs sustains our hope that such a construction exists. The number of possible subgraphs to consider becomes unwieldy even for moderately-sized complete graphs, so in this work we limited our study to simple subgraphs like paths and cycles. Therefore, we expect there could be many more designs in the same vein. Perhaps one could benefit from some computer programming to expedite the enumeration of subgraphs.

The relationship between the KP designs and the KC designs raises questions about relationships between other designs. Of course, one can construct an exploded design from any block design, but of greater interest is the inverse problem: What existing designs are exploded designs of other designs? The graph-theoretic structure of the KP designs brought that question to the fore, but perhaps other designs are exploded designs as well.

On the topic of exploded designs, it may be worthwhile to determine some conditions under which exploded designs do or do not contain repeated blocks. For example, we showed that the $(k-1)$-exploded design of a KC design is the corresponding KP design, which has no repeated blocks, but the $(k - 2)$-exploded design of the same KC design does contain repeated blocks.

\newpage

\nocite{godsil-royle}

\printbibliography

\end{document}